\newtheorem{thm}{Theorem}
\newtheorem{rmk}{Remark}
\DeclareMathOperator{\ad}{ad}
\DeclareMathOperator{\Ad}{Ad}
\DeclareMathOperator{\Ric}{Ric}
\begin{document}

\title[On a characterization of critical points \dots]
{On a characterization of critical points of the scalar curvature functional}
\author{Yu.G. Nikonorov}

\begin{abstract}
This is an English translation of the following paper, published several years ago:
{\it
Nikonorov Yu.G.
On a characterization of critical points of the scalar curvature functional (Russian),
Tr. Rubtsovsk. Ind. Inst.,  7, 211--217 (2000), Zbl 0956.53033}.
All inserted footnotes provide additional information related to the mentioned problem.

\vspace{2mm} \noindent 2020 Mathematical Subject Classification:
 53C25, 53C30, 17B20.

\vspace{2mm} \noindent Key words and phrases: critical point, Einstein metric, homogeneous Riemannian manifold, scalar curvature functional.
\end{abstract}

\maketitle

Let $G/H$ be a compact homogeneous spaces with connected  compact Lie group $G$ and its compact subgroup $H$.
Denote by $\mathfrak{g}$ and $\mathfrak{h}$ Lie algebras of $G$ and $H$ respectively.
We suppose that $G/H$ is almost effective, i.~e. there is no non-trivial ideals of the Lie algebra $\mathfrak{g}$  in $\mathfrak{h} \subset\mathfrak{g}$.
Denote by $B=B(\cdot\,,\cdot)$ the Killing form of $\mathfrak{g}$. Since $G$ is compact,
there is a positive definite bi-invariant inner product $\langle {\cdot}\,,{\cdot}\rangle$ on the Lie algebra $\mathfrak{g}$.
In the case when $G$ is semisimple, we can can choose $\langle{\cdot}\,,{\cdot}\rangle:=-B({\cdot}\,,{\cdot})$.

Let $\mathfrak{p}$ be the $\langle {\cdot}\,, {\cdot}\rangle$-orthogonal complement to $\mathfrak{h}$ in $\mathfrak{g}$.
It is clear that $\mathfrak{p}$ is $\Ad(H)$-invariant (and $\ad(\mathfrak{h})$-invariant, in particular).
The module $\mathfrak{p}$ is naturally identified with the tangent space to $G/H$ at the point $eH$.
Every $G$-invariant Riemannian metric on~$G/H$ generates an~$\Ad(H)$-invariant
inner product on~$\mathfrak{p}$ and vice versa. Therefore, it is possible to
identify invariant Riemannian metrics on~$G/H$ with $\Ad(H)$-invariant inner
products on~$\mathfrak{p}$ \cite{Bes}.
In the case when $G$ is semisimple, the Riemannian metric generated by the~inner product
$-B(\cdot\,,\cdot)\bigr\vert_{\mathfrak{p}}$ is called
{\it standard} or {\it Killing}.

We are interested in $G$-invariant
Einstein metrics on a homogeneous space
$M=G/H$.
Let us consider the set $\mathcal{M}_1$ of $\Ad(H)$-invariant inner product of some fixed volume on $\mathfrak{p}$.
It is well known that the critical points of the scalar curvature functional $S$ on $\mathcal{M}_1$
are exactly Einstein $G$-invariant metric on the space $G/H$ \cite{Bes, Jen2, Nik1998, WZ3}.

\smallskip

An interesting problem is the study of the nature of a given critical point
of the functional $S$. In particular, sometimes it is required to establish the nondegeneracy
of a critical point or check this point for the property of being a point of local
maximum (minimum) of the functional under investigation.\footnote{ See the paper
[{\it C. B\"{o}hm, M. Wang, W. Ziller, A variational approach for compact homogeneous Einstein manifolds,
Geom. Funct. Anal., 14:4 (2004), 681--733}\,] for a discussion of related problems.
A new approach to the study of the nature of critical points for $S$ is presented in the recent papers
[{\it J. Lauret, On the stability of homogeneous Einstein manifolds, preprint, 2021, arXiv:2105.06336},
{\it J. Lauret, C.E. Will, On the stability of homogeneous Einstein manifolds II, preprint, 2021, arXiv:2107.00354}].}

In this paper, we present some theorems giving partial answers
to the above questions.

\smallskip

Let us consider a metric $(\cdot , \cdot) \in \mathcal{M}_1$, which is a critical points of the scalar curvature functional $S$
and denote by $\Ric(\cdot , \cdot)$ its Ricci curvature.
Now, we consider a two-parameter variation of this metric in the class of
$\Ad(H)$-invariant metrics
$$
(\cdot , \cdot)_{t,s}=(e^{tV+sU}\cdot , \cdot),
$$
that is obtained using symmetric (with respect to $(\cdot ,\cdot)$) and $\Ad(H)$-equivariant operators $U$ and $V$ with zero traces.
The problem of studying the second variation of the scalar curvature functional is complicated by
by the fact that a priori it is impossible at the same time
bring three symmetric quadratic forms at once to a diagonal form.
In what follows, we assume that the forms
$(\cdot , \cdot)$, $(V\cdot , \cdot)$, $(U\cdot , \cdot)$
(they are $\Ad(H)$-invariant)
are simultaneously reduced to a diagonal form.
Let us consider an orthonormal basis for
$(\cdot , \cdot)$,
in which the operators $U$ and $V$ are diagonal.

In this case, the modules $\mathfrak{p}$ is decomposed into the sum of
$\Ad(H)$-irreducible and pairwise $(\cdot , \cdot)$-orthogonal submodules $\mathfrak{p}_1, \mathfrak{p}_2,\dots, \mathfrak{p}_q$.
In every $\mathfrak{p}_i$, we choose the ortonormal basis $e^1_i, e^2_i,...,e^{d_i}_i$ with respect to
$(\cdot, \cdot)$, where $d_i=\dim\, \mathfrak{p}_i$.
On every $\mathfrak{p}_i$, the quadratic forms
$(\cdot , \cdot)$, $(U\cdot ,\cdot)$, and $(V\cdot ,\cdot)$ are proportional each to other, sinse
$\Ad(H)$-irreducible.

For any triple of modules $\mathfrak{p}_i$, $\mathfrak{p}_j$, and $\mathfrak{p}_k$ we consider the value
$$
\left[
\begin{array}{c}
k
\\
i\,j
\\
\end{array}
\right]
=\sum\limits_{\alpha ,\beta,\gamma}
([e^{\alpha}_i,e^{\beta}_j]_p,e^{\gamma}_k)^2\, ,
$$
where $\alpha ,\beta,\gamma$ vary from $1$ to $d_i$, $d_j$,
and $d_k$
respectively and $[\cdot, \cdot]_{\mathfrak{p}}$ means the $\mathfrak{p}$-component of the Lie bracket.
Let us note that
$
\left[
\begin{array}{c}
k
\\
i\,j
\\
\end{array}
\right]
=
\left[
\begin{array}{c}
k
\\
j\,i
\\
\end{array}
\right]
$.
\smallskip

Let $r_i$ and $b_i$ be the values of the Ricci curvature and the Killing form
respectively on some unit vector from $\mathfrak{p}_i$. Obviously,
due to the irreducibility of this module,
these values are independent of the choice of the vector.
We know the following expression for the Ricci curvature (see e.g. (1) in \cite{Nik1998}) and for the scalar curvature (see \cite{WZ3})
of the metric $(\cdot , \cdot)$:
$$
r_k=-\frac{b_k}{2}
-\frac{1}{4d_k} \left( 2  \sum\limits_{i,j}
\left[
\begin{array}{c}
j
\\
k\,i
\\
\end{array}
\right]
-
\sum\limits_{i,j}
\left[
\begin{array}{c}
k
\\
i\,j
\\
\end{array}
\right] \right) \, ,
$$
$$
S=
-\sum\limits_{i} \frac{b_id_i}{2}
-\frac{1}{4} \sum\limits_{i,j,k}
\left[
\begin{array}{c}
k
\\
i\,j
\\
\end{array}
\right] \, .
$$

Obviously, if the metric $(\cdot, \cdot)$ is Einstein, then there exists a constant $r$ such that
the equality $r_k =r$ is fulfilled for all $k$.

Note that vectors
$\{e^{-\frac{u_it+v_is}{2}} e^k_i \}$, where $u_i$, $v_i$ are diagonal
elements of the operators $U$ and $V$ in the chosen basis, form an orthonormal
basis for the metric $(\cdot, \cdot)_{t,s}$.
Using formula 7.29 from \cite{Bes}, it is easy to obtain the following expression
for the scalar curvature of this metric:

$$
S((\cdot, \cdot )_{t,s})=
-\sum\limits_{i} \frac{b_id_i}{2}
e^{-u_i t-v_is}
-\frac{1}{4} \sum\limits_{i,j,k}
\left[
\begin{array}{c}
k
\\
i\,j
\\
\end{array}
\right]
e^{(u_k-u_i-u_j)t+(v_k-v_i-v_j)s} \, .
$$

It is easy to get

\begin{eqnarray*}
S((\cdot, \cdot )_{t,s})^{\prime}_t=
\sum\limits_{i} \frac{b_id_iu_i}{2}
e^{-u_i t-v_is}
-\frac{1}{4} \sum\limits_{i,j,k}
\left[
\begin{array}{c}
k
\\
i\,j
\\
\end{array}
\right]
(u_k-u_i-u_j)
e^{(u_k-u_i-u_j)t+(v_k-v_i-v_j)s} \,
\end{eqnarray*}

and

\begin{eqnarray*}
S((\cdot, \cdot )_{t,s})^{\prime \prime}_{ts}=
-\sum\limits_{i} \frac{b_id_iu_iv_i}{2}
e^{-u_i t-v_is}
\\
-\frac{1}{4} \sum\limits_{i,j,k}
\left[
\begin{array}{c}
k
\\
i\,j
\\
\end{array}
\right]
(u_k-u_i-u_j)(v_k-v_i-v_j)
e^{(u_k-u_i-u_j)t+(v_k-v_i-v_j)s} \, .
\end{eqnarray*}

If the metric $(\cdot, \cdot)$ is a degenerate critical point,
then there must exist a symmetric operator
$U$ with zero trace such that for any other symmetric operator
$V$ the equality
$S((\cdot, \cdot )_{t,s})^{\prime \prime}_{ts}(0,0)=0$
holds.

In particular, for all $V$ with the property
$\sum\limits_{i}v_id_i=0$ the following equality holds:

$$
\sum\limits_{i} \left(
-\frac{b_id_iu_i}{2}
-\frac{1}{4} \sum\limits_{j,k}
\left(
\left[
\begin{array}{c}
i
\\
k\,j
\\
\end{array}
\right]
(u_i-u_j-u_k)-
2\left[
\begin{array}{c}
k
\\
i\,j
\\
\end{array}
\right]
(u_k-u_i-u_j)
\right)
\right) v_i=0\, .
$$

Since $V$ is arbitrary, the last equality is equivalent to
the statement about the existence of such a number $\mu$,
that the equations

$$
-\frac{b_id_iu_i}{2}
-\frac{1}{4} \sum\limits_{j,k}
\left(
\left[
\begin{array}{c}
i
\\
k\,j
\\
\end{array}
\right]
(u_i-u_j-u_k)-
2\left[
\begin{array}{c}
k
\\
i\,j
\\
\end{array}
\right]
(u_k-u_i-u_j)
\right)
=\mu d_i
$$
hold  for all $i$.

\medskip

Now for the metric
$(\cdot, \cdot)$ and its fixed decomposition (any fixed decomposition of $\mathfrak{p}$ into the sum of pairwise $(\cdot, \cdot)$-orthogonal and $\Ad(H)$-irreducible
submodules) we define a square matrix $F$ of size $q \times q$, whose elements
are expressed as follows:

$$
f_{ii}=-\frac{b_id_i}{2}
-\frac{1}{4} \sum\limits_{j,k}
\left(
\left[
\begin{array}{c}
i
\\
k\,j
\\
\end{array}
\right]
+
2\left[
\begin{array}{c}
k
\\
i\,j
\\
\end{array}
\right]
\right)
-\frac{1}{2} \sum\limits_{j}
\left(
\left[
\begin{array}{c}
j
\\
i\,i
\\
\end{array}
\right]
-
2\left[
\begin{array}{c}
i
\\
i\,j
\\
\end{array}
\right]
\right),
$$

$$
f_{ij}=
\frac{1}{2} \sum\limits_{k}
\left(
\left[
\begin{array}{c}
j
\\
i\,k
\\
\end{array}
\right]
+
\left[
\begin{array}{c}
i
\\
k\,j
\\
\end{array}
\right]-
\left[
\begin{array}{c}
k
\\
i\,j
\\
\end{array}
\right]
\right) \mbox{\,\,\,\,\,for \,\,\, }i \neq j\,.
$$

\begin{thm}\label{thm0}
If $(\cdot , \cdot)$ is a degenerate critical point of the scalar curvature functional $S$
on the set $\mathcal{M}_1$, then the matrix $F$ is degenerate for some decomposition.
Conversely, if the matrix $F$ is degenerate for some decomposition and there are no pairwise isomorphic irreducible submodules in this (hence, any) decomposition,
then $(\cdot , \cdot)$ is a degenerate critical point of $S$ on the set $\mathcal{M}_1$.
\end{thm}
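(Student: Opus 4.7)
The excerpt has already reduced degeneracy (restricted to variations $V$ simultaneously diagonal with $U$ in a chosen decomposition) to the existence of a nonzero $u=(u_1,\ldots,u_q)$ with $u\cdot d=0$ (where $d=(d_1,\ldots,d_q)^T$) and a scalar $\mu$ satisfying $E_i(u)=\mu d_i$ for all $i$, where $E_i(u)$ denotes the left-hand side of the displayed equation just before the definition of $F$. The plan begins by verifying, through a direct but bookkeeping-heavy expansion, that $E_i(u)=(Fu)_i$: collecting the coefficient of each $u_l$ in $E_i(u)$ and using only the symmetry $\left[\begin{array}{c}m\\j\,k\end{array}\right]=\left[\begin{array}{c}m\\k\,j\end{array}\right]$ together with relabeling of dummy indices, the coefficient of $u_l$ matches $f_{il}$ for $l\neq i$ and the coefficient of $u_i$ matches $f_{ii}$. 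Thus the degeneracy equation reads exactly $Fu=\mu d$ subject to $u\cdot d=0$.

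The geometric bridge to $F$ is the Einstein identity $F\mathbf{1}=r\,d$, where $\mathbf{1}=(1,\ldots,1)^T$. Substituting $u\equiv 1$ collapses every factor $u_i-u_j-u_k$ in $E_i$ to $-1$, and the resulting expression $-\tfrac{b_id_i}{2}+\tfrac{1}{4}\sum_{j,k}\left[\begin{array}{c}i\\k\,j\end{array}\right]-\tfrac{1}{2}\sum_{j,k}\left[\begin{array}{c}k\\i\,j\end{array}\right]$ matches $d_ir_i$ via the Ricci formula of the excerpt (after relabeling one summation index); the Einstein condition $r_i=r$ then yields $F\mathbf{1}=r\,d$. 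Below I assume the generic case $r\neq 0$ (i.e., the manifold is not flat).

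For the forward direction, choose a nonzero trace-zero $\Ad(H)$-equivariant symmetric $U$ in the kernel of the Hessian. Since $U$ commutes with $\Ad(H)$, each of its eigenspaces is $\Ad(H)$-invariant, and refining each into $\Ad(H)$-irreducibles yields a decomposition in which $U$ is scalar on every $\mathfrak{p}_i$. The excerpt's derivation applied in this decomposition, taking $V$ simultaneously diagonal with $U$ (already a rich enough class to force the conclusion), produces a nonzero $u$ with $u\cdot d=0$ and $Fu=\mu d$. Because $\mathbf{1}\cdot d=\dim\mathfrak{p}>0$ while $u\cdot d=0$, the vector $u$ is not proportional to $\mathbf{1}$, so $w:=u-(\mu/r)\mathbf{1}$ is a nonzero element of $\ker F$, and $F$ is singular for this decomposition.

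For the converse, assume $F$ is singular for a decomposition with pairwise non-isomorphic irreducible summands. By Schur's lemma every $\Ad(H)$-equivariant symmetric operator on $\mathfrak{p}$ acts as a scalar on each $\mathfrak{p}_i$, hence is automatically diagonal in this decomposition, so the excerpt's ``simultaneously diagonal'' computation now applies to every trace-zero $V$. It remains to produce $u\neq 0$ with $u\cdot d=0$ and $Fu\in\mathrm{span}(d)$. Since $F\mathbf{1}=r\,d$ places $\mathrm{span}(d)\subset\mathrm{range}(F)$, the preimage $F^{-1}(\mathrm{span}(d))$ has dimension $\dim\ker F+1\ge 2$, and its intersection with the hyperplane $d^{\perp}$ has dimension at least $1$; any $u$ in this intersection defines the required degenerating direction. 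The only genuine obstacle is the coefficient-matching of the first step, which is routine but arithmetically heavy; all structural content reduces to the Einstein identity $F\mathbf{1}=r\,d$ and the Schur-lemma upgrade from ``$V$ diagonal'' to ``$V$ arbitrary''.
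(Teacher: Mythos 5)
Your proposal follows essentially the same route as the paper's own proof: the identity $F\mathbf{1}=r\,d$, the observation that $u$ cannot be proportional to $\mathbf{1}$ because $u\cdot d=0$ while $\mathbf{1}\cdot d=\dim\mathfrak{p}>0$, the Schur-lemma reduction in the converse, and the dimension count intersecting $F^{-1}(\mathrm{span}(d))$ (the paper's $\mathrm{span}(b,p)$) with the hyperplane $d^{\perp}$. The only divergence is that you set aside the case $r=0$, which the paper dispatches in the forward direction in one line (there $F\mathbf{1}=0$ makes $F$ singular outright); with that line restored your argument matches the paper's.
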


\begin{proof}
Let us consider the vector $d=(d_1,\dots ,d_q)\in\mathbb{R}^q$.
If the metric under consideration is a degenerate critical point,
then, as the previous reasoning shows,
there must exist such a decomposition, such a number $\mu$ and such a vector
$u = (u_1, \dots, u_q)$ with the condition
$\sum\limits_{i}d_iu_i =0$,
for which the equality
$Fu = \mu d$ holds.
It is easy to check that $Fb = rd$,
where all coordinates of the vector $b = (1, \dots, 1)\in \mathbb{R}^q$ are equal to $1$ and $r$ is the Ricci constant.
If $r = 0$, then $F$ is degenerate.
Suppose $r \neq 0$. It is clear that $F\bigl(\frac{\mu}{r}b\bigr)=\mu d$.
Since the vectors $b$ and $u$
are not collinear, and $F\bigr(\frac{\mu}{r}b-u\bigr)=0$, then the matrix $F$ is degenerate.

Let us now prove the second assertion of the theorem. We know that there are no
pairwise equivalent $\Ad(H)$-invariant irreducible submodules in  $\mathfrak{p}$.
Hence, any three quadratic $\Ad(H)$-invariant forms on $\mathfrak{p}$,
one of which is positive definite, are simultaneously diagonalized.
In this case, the matrix $F$ is uniquely determined.
It is enough for us to show the existence of the vector
$u=(u_1,...,u_q)$, that satisfies the conditions
$\sum\limits_{ i}d_iu_i=0$ and
$Fu=\mu d$.

Since the matrix $F$ is degenerate, there is a non-trivial vector
$p\in \mathbb{R}^q$ such that $Fp=0$. Moreover, we have
$Fb=rd$, where the vector $b=(1,\dots, 1)$ is as above.
Let us consider
the linear subspace $L_1$ in $\mathbb{R}^q$ spanned by the vectors $b$
and $p$. Obviously, we have $\dim (L_1) = 2$. Now we consider the hyperplane
$L_2=\left\{x\in \,|\,\sum\limits_{i=1}^q d_ix_i=0\right\}$.
Since
$\dim(L_2)=q-1$ and
$\dim(L_1)+\dim(L_2)=q+1>\dim(\mathbb{R}^q)=q$, then there is a non-trivial
vector $u\in L_1 \cap L_2$.
Obviously, it is what we need. Indeed, $Fu = \mu d$ for some real $\mu$, hence $\sum\limits_{ i} (Fu)_i v_i=0$ for any $v\in \mathbb{R}^q$ such that
$\sum\limits_{ i}d_iv_i=0$.
The theorem is proved.
\end{proof}

\medskip

We now turn to the study of sufficient conditions for the Einstein metric
(as the critical point of the scalar curvature functional $S$) would be the point
local minimum or maximum.

Lets us consider the following one-parameter variation of the metric $(\cdot, \cdot )$: $(\cdot, \cdot )_{t}=(\cdot, \cdot )_{t,s}|_{s=0}$.
Since
$$
S((\cdot, \cdot )_{t})^{\prime \prime}=S((\cdot, \cdot )_{t,s})^{\prime \prime}_{tt} (0,0)=
-\sum\limits_{i} \frac{b_id_iu_i^2}{2}
-\frac{1}{4} \sum\limits_{i,j,k}
\left[
\begin{array}{c}
k
\\
i\,j
\\
\end{array}
\right]
(u_k-u_i-u_j)^2 =
$$
$$
=\sum\limits_{i}
\left(
-\frac{b_id_i}{2}
-\frac{1}{4} \sum\limits_{j,k}
\left(
\left[
\begin{array}{c}
i
\\
k\,j
\\
\end{array}
\right]
+
2\left[
\begin{array}{c}
k
\\
i\,j
\\
\end{array}
\right]
\right)
-\frac{1}{2} \sum\limits_{j}
\left(
\left[
\begin{array}{c}
j
\\
i\,i
\\
\end{array}
\right]
-
2\left[
\begin{array}{c}
i
\\
i\,j
\\
\end{array}
\right]
\right)
\right) u_i^2+
$$

$$
+\sum\limits_{i,j}
\left(
\frac{1}{2} \sum\limits_{k}
\left(
\left[
\begin{array}{c}
j
\\
i\,k
\\
\end{array}
\right]
+
\left[
\begin{array}{c}
i
\\
k\,j
\\
\end{array}
\right]-
\left[
\begin{array}{c}
k
\\
i\,j
\\
\end{array}
\right]
\right)
\right)u_iu_j,
$$
then the question of whether the critical point under consideration gives
a local maximum (minimum) to the scalar curvature functional, essentially
is reduced to the question of negative (positive) definiteness
of the quadratic form generated by the matrix $F$ on the set of vectors
$u$ with the constraint
$\sum\limits_{i}d_iu_i=0$.

\smallskip

In what follows, we will investigate only the critical points corresponding to
{\it standard homogeneous Einstein metrics}
on compact homogeneous spaces $G/H$. Recall that the metric is called standard if
the corresponding inner product on $\mathfrak{p}$ is obtained
as a restriction of the minus Killing form of the algebra $\mathfrak{g}$.
There are many examples of standard homogeneous Einstein metrics \cite{Bes}.
Note that for the standard metric the expressions
$
\left[
\begin{array}{c}
k
\\
i\,j
\\
\end{array}
\right]
$
are symmetric in all three indices due to
the bi-invariance of the Killing form. Thus, the
expressions for the entries of the matrix $F$ simplified.
Moreover, we have $b_i=-1$ and
\begin{equation}\label{stwz}
\sum\limits_{j,k}
\left[
\begin{array}{c}
k
\\
i\,j
\\
\end{array}
\right]=
d_i(1-2c)
\end{equation}
for any $i=1,2,\dots, q$ by \cite{WZ3}, where
$c$ is {\it the Casimir constant}
\cite[Proposition 7.92]{Bes} of the standard homogeneous Einstein
manifold, that is related to the number $r$
via the relationship
$c = \frac{4r-1}{2}$.
It is well known \cite{Bes} that for any standard homogeneous
Einstein manifold, the inequalities
$\frac{1}{4} \leq r \leq \frac{1}{2}$ and
$0 \leq c \leq \frac{1}{2}$ hold.

Therefore, in the case of standard metrics, we have

$$
f_{ii}=\frac{d_i}{2}
-\frac{3}{4} \sum\limits_{j,k}
\left[
\begin{array}{c}
k
\\
i\,j
\\
\end{array}
\right]
+\frac{1}{2} \sum\limits_{j}
\left(
\left[
\begin{array}{c}
j
\\
i\,i
\\
\end{array}
\right]
\right)=
\frac{6c-1}{4}d_i
+\frac{1}{2} \sum\limits_{j}
\left(
\left[
\begin{array}{c}
j
\\
i\,i
\\
\end{array}
\right]
\right),
$$

$$
f_{ij}=
\frac{1}{2} \sum\limits_{k}
\left[
\begin{array}{c}
k
\\
i\,j
\\
\end{array}
\right].
$$

\begin{thm}\label{thm1}
Let $(\cdot, \cdot)$ be
a standard homogeneous Einstein metric, and
the Casimir constant satisfies the condition $c> 3/10$, then this
metric, as a critical point of the scalar curvature functional, is
a point of local minimum of the functional $S$ on the set $\mathcal{M}_1$.
\end{thm}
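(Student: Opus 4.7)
The plan is to prove positive definiteness of the quadratic form $Q(u):=u^{T}Fu$ on all of $\mathbb{R}^q$; this is stronger than what is needed, since the reduction discussed just before the theorem only requires positivity on the hyperplane $\{\sum_i d_i u_i=0\}$. The main tool is the full symmetry, valid for a standard metric, of the symbols $\left[\begin{array}{c}k\\i\,j\end{array}\right]$ in all three indices $(i,j,k)$, which causes most of the terms in $Q(u)$ to collapse.

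First, I would rewrite $Q(u)=S''_{tt}(0,0)$ using the expression displayed earlier in the paper, substitute $b_i=-1$, and then expand the square $(u_k-u_i-u_j)^2$. By the three-fold symmetry of the bracket symbols the three ``diagonal'' sums $\sum[\cdot]u_i^2$, $\sum[\cdot]u_j^2$, $\sum[\cdot]u_k^2$ all coincide, each reducing via (\ref{stwz}) to $(1-2c)\sum_i d_i u_i^2$, and the three ``mixed'' sums likewise collapse to a single quantity $T(u):=\sum_{i,j,k}\left[\begin{array}{c}k\\i\,j\end{array}\right]u_iu_j$. Collecting terms I expect to obtain
$$Q(u)=\frac{6c-1}{4}\sum_i d_i u_i^2+\frac{1}{2}T(u).$$

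The crucial step is the lower bound on $T(u)$. Set $a_{ij}:=\sum_k\left[\begin{array}{c}k\\i\,j\end{array}\right]$; each $a_{ij}$ is a sum of squares (so nonnegative) and symmetric in $i,j$, and by (\ref{stwz}) satisfies $\sum_j a_{ij}=d_i(1-2c)\ge 0$ (using the bound $c\le 1/2$ recorded above). Applying the pointwise inequality $2u_iu_j\ge -(u_i^2+u_j^2)$ termwise and then invoking the row sums yields $T(u)\ge -(1-2c)\sum_i d_i u_i^2$, and substituting this into the display above gives
$$Q(u)\;\ge\;\frac{10c-3}{4}\sum_i d_i u_i^2,$$
which is strictly positive for $u\ne 0$ precisely when $c>3/10$.

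I expect the main obstacle to be essentially bookkeeping in the first step: the cancellations require careful relabeling of dummy indices, but they are forced by the full symmetry of the bracket symbols. The proof is short but also tight, in two senses: the pointwise inequality $2u_iu_j\ge -(u_i^2+u_j^2)$ is saturated at $u_i=-u_j$, and the argument never invokes the volume constraint $\sum_i d_i u_i=0$. Consequently the threshold $c>3/10$ is the best that this approach can deliver; obtaining a sharper result (for instance, covering the full admissible range $c\in[1/4,1/2]$) would seem to require genuinely using the constraint together with finer information on the matrix $(a_{ij})$.
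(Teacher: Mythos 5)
Your proof is correct and is essentially the paper's own argument in a different guise: your termwise bound $2u_iu_j\ge -(u_i^2+u_j^2)$ applied to the nonnegative coefficients $a_{ij}$ with row sums $d_i(1-2c)$ is precisely the diagonal-dominance estimate $f_{ii}>\sum_{j\ne i}|f_{ij}|$ that the paper derives from (\ref{stwz}) and then feeds into the Gershgorin circle theorem. Both versions establish positive definiteness of $F$ on all of $\mathbb{R}^q$ without invoking the constraint $\sum_i d_iu_i=0$, and both discard the same nonnegative terms, which is why you land on the identical threshold $c>3/10$.
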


\begin{proof}
Taking into account (\ref{stwz}), we have the following inequality for the entries of the matrix $F$ under the condition $c>3/10$ for the standard Einstein metric:
$$
f_{ii}=\frac{6c-1}{4} d_i
+\frac{1}{2} \sum\limits_{j}
\left[
\begin{array}{c}
j
\\
i\,i
\\
\end{array}
\right]>
\frac{d_i}{2}(1-2c) +
\frac{1}{2} \sum\limits_{j}
\left[
\begin{array}{c}
j
\\
i\,i
\\
\end{array}
\right]\geq
$$
$$
\geq
\frac{1}{2} \sum\limits_{j,k}
\left[
\begin{array}{c}
k
\\
i\,j
\\
\end{array}
\right]\geq
\sum\limits_{j\neq i} f_{ij}=
\sum\limits_{j\neq i} |f_{ij}|.
$$
Therefore, for all indices  $i$ we get the inequalities
$$
f_{ii} >
\sum\limits_{j\neq i} |f_{ij}|,
$$
guaranteeing the positivity of the eigenvalues of the matrix $F$
even without the restriction
$\sum\limits_{i} d_iu_i=0$ by the Gershgorin circle theorem, see e.g. \cite[P.~390]{Gant}.\footnote{\,See also Theorems 6.1.1 and 6.1.10 in
[{\it Horn R.A., Johnson C.R. Matrix analysis. Second edition. Cambridge University Press, Cambridge, 2013\,}].}

The theorem is proved.\footnote{\,We can consider a symmetric space $G/H$ of compact type as an example to Theorem \ref{thm1}.
For such a symmetric space, the Casimir constant is equal to 1/2. Moreover, in this case the standard metric gives not only a local,
but a global minimum point of $S$ on $\mathcal{M}_1$.
Other more or less simple examples we can get among generalized Wallach spaces, see e.g.
[{\it Z. Chen, Yu.G. Nikonorov, Invariant Einstein metrics on generalized Wallach spaces, Science China Mathematics, 62:3 (2019), 569--584}] and the references therein.
If we consider the spaces from Table 1 of the  above paper, then the standard metric is Einstein if and only if $a_1=a_2=a_3=:a$ for this space
in the notation of Table 1. The Casimir constant of such standard Einstein metric is equal to $1/2-a$. Hence, if $a < 1/5$, then this standard Einstein metric gives
a local minimum point for $S$ on the set~$\mathcal{M}_1$. In particular, the standard metric is a local minimum point for $S$ for the spaces
$Sp(3k)/Sp(k)\times Sp(k)\times Sk(k)$ and $SU(3k)/S(U(k)\times SU(k) \times SU(k))$ when $k \geq 1$, as well as for the space $SO(3k)/SO(k)\times SO(k)\times SO(k)$
when $k \geq 5$. It could be shown also that the standard metric on the space $SO(3k)/SO(k)\times SO(k)\times SO(k)$ gives a local maximum point for $k=1$
(see Theorem \ref{thm2}),
a saddle point for $k=2$ and a local minimum point for $k=3,4$, see Theorem 7 in [{\it N.A. Abiev, A. Arvanitoyeorgos, Yu.G. Nikonorov, P. Siasos,
The Ricci flow on some generalized Wallach spaces,
Geometry and its Applications, Springer Proceedings in Mathematics $\&$ Statistics, Vol. 72, eds. V. Rovenski, P. Walczak, Springer, 2014, 3--37\,}].
The stability of Einstein metrics on all generalized Wallach and some more general spaces are studied in
[{\it J. Lauret, On the stability of homogeneous Einstein manifolds, preprint, 2021, arXiv:2105.06336},
{\it J. Lauret, C.E. Will, On the stability of homogeneous Einstein manifolds II, preprint, 2021, arXiv:2107.00354}].
}
\end{proof}

We now turn to the study of standard metrics on semisimples
compact Lie groups $G$. It is well known that such metrics are
Einstein (the Casimir constant in this case is $0$). We will give in some cases
their characterization from the point of view of analysis.

\begin{thm}\label{thm2}
Let
$(\cdot , \cdot)$
be the standard metric on a compact semisimple Lie group $G$. Then the following assertions hold.

1) If $G$ is locally isomorphic to the group $SU(2)$, then
$(\cdot , \cdot)$ is a local maximum point of the scalar curvature functional $S$.

2) For any semisimple group $G$, then the metric $(\cdot , \cdot)$ is not a local minimum point for $S$.

3) If the group $G$ is not simple, then
$(\cdot , \cdot)$ is a saddle point for the scalar curvature functional $S$.

4) If the group $G$ is locally isomorphic to the group $Sp(n)$ ($n\geq 2)$
or to the group $SU(n)$ ($n\geq 3$), then
$(\cdot , \cdot)$ is a saddle point for the scalar curvature functional $S$.

\end{thm}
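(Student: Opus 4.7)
The plan is to reduce each assertion to a second-variation analysis via the matrix $F$ of the preceding theorem. With $H$ trivial and $(\cdot,\cdot) = -B$, we have $b_i = -1$, $c = 0$, and bi-invariance of the Killing form makes $([e_i,e_j],e_k)^2$ totally symmetric in $i,j,k$. For the finest decomposition ($d_i = 1$, 1-dimensional pieces) the matrix collapses to $F = \tfrac12 T - \tfrac14 I$ with $T_{ij} := \|[e_i,e_j]\|^2$, and the Hessian becomes $S''(u) = \tfrac12 u^T T u - \tfrac14 |u|^2$ on $\sum u_i = 0$. The matrix $T$ is symmetric and nonnegative with zero diagonal, row sums $1$ (by $\sum_{j,k}([e_i,e_j],e_k)^2 = -B(e_i,e_i) = 1$), trace $0$, and eigenvalue $1$ on $b = (1,\ldots,1)$.

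I would first dispatch (1), (2), (3) by manipulating this $T$-formula. For $G \simeq SU(2)$, the structure constants force $T_{ij} = 1/2$ off the diagonal, and on $\sum u_i = 0$ one finds $u^T T u = -\tfrac12|u|^2$, whence $S'' < 0$ strictly: local maximum. For any semisimple $G$, $\mathrm{tr}(T) = 0$ combined with $Tb = b$ forces the eigenvalues of $T|_{b^\perp}$ to sum to $-1$; since $\dim\mathfrak{g}\geq 3$, some such eigenvalue is $< 1/2$, giving a direction with $S'' < 0$: not a local minimum. If $\mathfrak g = \mathfrak g_1 \oplus \mathfrak g_2$, take $U = d_2\,\mathrm{Id}_{\mathfrak g_1} - d_1\,\mathrm{Id}_{\mathfrak g_2}$; then $T$ is block-diagonal with row-sum $1$ in each block, so $u^T T u = |u|^2$ and $S''(U) = \tfrac14|u|^2 > 0$. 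Combined with the previous sentence this yields a saddle.

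For (4) with $G \simeq Sp(n)$, $n \geq 2$, I would instead use the coarser three-block decomposition $\mathfrak{sp}(n) = \mathfrak{sp}(1) \oplus \mathfrak{sp}(n-1) \oplus \mathfrak p$ associated to the symmetric pair $(Sp(n), Sp(1) \times Sp(n-1))$. The Dynkin-index identity $B_{\mathfrak{sp}(n)}|_{\mathfrak{sp}(k)} = \tfrac{n+1}{k+1} B_{\mathfrak{sp}(k)}$ gives $a_1 = 6/(n+1)$ and $a_2 = n(n-1)(2n-1)/(n+1)$, and the Casimir identity then pins down $b_{13} = 3(n-1)/(n+1)$, $b_{23} = (n-1)(2n-1)/(n+1)$. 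Substituting these into the definition of $F$ and imposing the constraint $3u_1 + (n-1)(2n-1)u_2 + 4(n-1)u_3 = 0$, the quadratic form reduces to
\[
u^T F u \;=\; -\,\frac{n}{4(n+1)}\bigl(3 u_1^2 + 3(2n-1)\,u_1 u_2 + (n-1)(2n-1)\,u_2^2\bigr).
\]
The inner quadratic has discriminant $3(2n-1)(2n+1) > 0$ and is therefore indefinite; $u^T F u$ takes strictly positive values on the constraint, and combined with (2) this yields a saddle.

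For (4) with $G \simeq SU(n)$, $n \geq 3$, second-order information is insufficient: every decomposition one tries yields an $F$ that is negative semidefinite on the constraint, with a one-dimensional kernel living in the adjoint subrepresentation of $S^2_0(\mathfrak g)$, carried by the symmetric invariant tensor $d_{abc} = \tfrac12\,\mathrm{tr}(\{e_a,e_b\} e_c)$ (nontrivial precisely for $n \geq 3$). The plan is to go to third order along the family $(\cdot,\cdot)_t = (e^{tU}\cdot,\cdot)$ with $U = d^a$, $a$ in a Cartan subalgebra (so that $U$ is diagonal in a root-plus-Cartan basis). In the model case $SU(3)$, $U = d^8 = \mathrm{diag}(1,1,1,-\tfrac12,-\tfrac12,-\tfrac12,-\tfrac12,-1)$ in the Gell--Mann basis, a direct tabulation of the nonzero $f_{abc}$ and of the exponents $u_k - u_i - u_j$ yields
\[
S(t) \;=\; \tfrac{1}{4}\bigl(2 e^{-t} + 8 e^{t/2} - e^{2t} - 1\bigr) \;=\; 2 \;-\; \tfrac{3}{8}\, t^3 \;+\; O(t^4),
\]
so $S'(0) = S''(0) = 0$ but $S'''(0) = -9/4 \neq 0$; the nonzero cubic term forces $S$ to exceed $S(0)$ for small $t < 0$ and fall below $S(0)$ for small $t > 0$, producing saddle behaviour. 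The same template extends to $SU(n)$ with $n \geq 3$: the Hessian vanishes on the $d^a$-direction by $G$-invariance (the adjoint scalar of the Hessian is $0$), while the cubic coefficient is a nonzero $G$-invariant scalar built from the $d$-tensor. The main obstacle is precisely this third-order bookkeeping for general $n$, which must be carried out case-by-case in an explicit root basis of $\mathfrak{su}(n)$.
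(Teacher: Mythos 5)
Parts 1)--3) and the $Sp(n)$ case of 4) of your proposal are correct. Your treatment of 1) is the paper's argument verbatim (same matrix $F=\tfrac12 T-\tfrac14 I$, same eigenvalues). For 2) and 3) the paper instead uses a two-block decomposition $\mathfrak{g}=\mathfrak{k}\oplus\mathfrak{k}^{\perp}$ with $\mathfrak{k}$ a one-dimensional torus (resp.\ a proper ideal) and computes the sign of the second derivative of a one-parameter volume-normalized family; your trace identity $\operatorname{tr}(T)=0$, $Tb=b$ for 2) and your block-diagonal $Tu=u$ computation for 3) produce the same negative and positive directions and are, if anything, a cleaner packaging. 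For $Sp(n)$ you use the three-block decomposition from the symmetric pair $(Sp(n),Sp(1)\times Sp(n-1))$ where the paper uses the two-block one from $Sp(n-1)$ alone; I verified your constants ($a_1=6/(n+1)$, $b_{13}=3(n-1)/(n+1)$, $b_{23}=(n-1)(2n-1)/(n+1)$) and the reduction of $u^{T}Fu$ on the constraint $3u_1+(n-1)(2n-1)u_2+4(n-1)u_3=0$; the resulting binary form is indeed indefinite, so this variant is a valid alternative.

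The genuine gap is the $SU(n)$ case of 4) for $n\geq 4$. You correctly observe that second-order information is inconclusive there and that one must go to higher order along a degenerate direction; your explicit $SU(3)$ computation is right (your curve is, up to the reparametrization $t\mapsto -t/2$, exactly the paper's Jensen-type curve, and $S'''(0)=-9/4$). But for general $n$ you only assert that ``the cubic coefficient is a nonzero $G$-invariant scalar built from the $d$-tensor'' and explicitly defer the verification to case-by-case root-basis bookkeeping that you do not carry out. That nonvanishing is precisely the content of the claim, so nothing is proved for $n\geq 4$. The paper closes this uniformly: with the decomposition $\mathfrak{su}(n)=\mathbb{R}\oplus\mathfrak{su}(n-1)\oplus\mathfrak{p}_3$ coming from $U(n-1)\subset SU(n)$ and the volume-normalized family with weights $(2,\,-2/(n-2),\,1)$, the scalar curvature is an explicit function $h(t)$ whose derivative factors as $h'(t)=\frac{n-1}{2}\,e^{-2(n-1)t/(n-2)}\bigl(e^{nt/(n-2)}-1\bigr)^{2}>0$ for $t\neq 0$; hence $h$ is strictly increasing through $t=0$ and the standard metric is a saddle for every $n\geq 3$ at once. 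To complete your route you should either reproduce this closed-form computation (its third Taylor coefficient at $0$ is exactly your cubic invariant) or give an independent proof that the invariant cubic is nonzero for all $n\geq 3$.
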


\begin{proof}
Note that in the case of Lie groups we have $d_i=1$ and
$\left[
\begin{array}{c}
k
\\
i\,i
\\
\end{array}
\right]=0$
for all indices.

Let us first consider the case of a Lie group locally isomorphic to the group $SU(2)$.
If in the Lie algebra $su(2)$
we choose a basis orthonormal with respect to the standard
metrics, then we get the commutation relations
$[e_i,e_j]=\pm \frac{1}{\sqrt{2}}e_k$ for $i\neq j \neq k \neq i$ ($\{i,j,k\}=\{1,2,3\}$).
Hence,  we get
$\left[
\begin{array}{c}
k
\\
i\,j
\\
\end{array}
\right]=1/2$ for distinct indices.
The $(3\times 3)$-matrix $F$ is such that $f_{ii}=-1/4$ and
$f_{ij}=1/4$ for $i \neq j$.
This matrix has the eigenvalues $1/4$ (of multiplicity $1$) and $-1/2$ (of multiplicity $2$), moreover
the eigenvalue $1/4$ corresponds to the eigenvector $(1,1,1)$.
Therefore, on the set of vectors $u=(u_1,u_2,u_3)$ with the relation
$u_1+u_2+u_3=0$,
the matrix $F$ is negatively definite, that is, the standard metric
is a point of strict local maximum for $S$. This proves assertion 1) of the theorem.
\smallskip

Consider now an arbitrary semisimple compact Lie group $G$.
with some closed subgroup $K$.
We represent the Lie algebra $\mathfrak{g}$ of the Lie group $G$ in the form
$\mathfrak{g}=\mathfrak{p}_1 \oplus \mathfrak{p}_2$, where the module $\mathfrak{p}_1$ is the Lie subalgebra $\mathfrak{k}$,
and the module $\mathfrak{p}_2$ is orthogonal to $\mathfrak{p}_1$
with respect to the standard metric (or, equivalently, with respect to the Killing form of $\mathfrak{g}$).
Let us consider the metrics, that have the following form:
$$
(\cdot, \cdot)_1=
x_1(\cdot , \cdot)|_{\mathfrak{p}_1}+
x_2(\cdot , \cdot)|_{\mathfrak{p}_2}, \quad \mbox{where} \quad x_1, x_2 >0.
$$
For the standard metric, the following relations holds:
$$\left[
\begin{array}{c}
1
\\
1\,1
\\
\end{array}
\right]+
\left[
\begin{array}{c}
2
\\
1\,2
\\
\end{array}
\right]=d_1, \quad
\left[
\begin{array}{c}
2
\\
1\,2
\\
\end{array}
\right]=
\left[
\begin{array}{c}
2
\\
2\,1
\\
\end{array}
\right]=
\left[
\begin{array}{c}
1
\\
2\,2
\\
\end{array}
\right],
$$

$$\left[
\begin{array}{c}
2
\\
2\,1
\\
\end{array}
\right]+
\left[
\begin{array}{c}
1
\\
2\,2
\\
\end{array}
\right]+
\left[
\begin{array}{c}
2
\\
2\,2
\\
\end{array}
\right]=d_2,
$$
where $d_i$ means the dimension of the module $p_i$. If we introduce the notation
$$
a:=\left[
\begin{array}{c}
1
\\
2\,2
\\
\end{array}
\right],
$$
we get a rather simple formula for the scalar curvature
of considered metrics, namely
$$
S((\cdot , \cdot)_1)=
\frac{d_1-a}{4} \frac{1}{x_1}+
\frac{d_2+2a}{4} \frac{1}{x_2}-
\frac{a}{4} \frac{x_1}{x_2^2}.
$$
Takin into account the volume restriction
$x_1^{d_1}x_2^{d_2}=1$, we consider a one-parameter variation of the standard metric of the form
$x_1=t$, $x_2=t^{-d_1/d_2}$.
Then
it is enough for us to investigate for the extremum at the point $t = 1$
the function
$$
f(t)=(d_1-a)t^{-1}+(d_2+2a)t^{d_1/d_2}-at^{1+2d_1/d_2}.
$$
It is easy to check that
$$
f^{\prime}(t)=-(d_1-a)t^{-2}+\frac{d_1}{d_2}(d_2+2a)t^{d_1/d_2-1}-
\left(
1+2\frac{d_1}{d_2}
\right)
ct^{2d_1/d_2},
$$
$$
f^{\prime \prime}(1)=2(d_1-a)+\frac{d_1}{d_2}
\left(
\frac{d_1}{d_2}-1
\right)
(d_2+2a)-
2\frac{d_1}{d_2}
\left(
1+2\frac{d_1}{d_2}
\right)a.
$$
Therefore, we will be interested in the sign of the number
$$
T=
d_2^2f^{\prime \prime}(1)=(d_1+d_2)(d_1d_2-2(d_1+d_2)a).
$$
\smallskip

Since we can take a one-dimensional torus as a subgroup $K$,
for which $d_1=a=1$,
we find a one-dimensional variation, for which
$T <0$ and scalar curvature functional $S$
reaches a local maximum on the standard metric (for $t=1$).
This proves assertion 2) of the theorem.
\smallskip

If the group $G$ is not simple, then as a subgroup
$K$ one can take a proper normal subgroup.
In this case, the modules $\mathfrak{p}_1$ and $\mathfrak{p}_2$ are ideals in the Lie algebra $\mathfrak{g}$.
Thus,
$
a=\left[
\begin{array}{c}
1
\\
2\,2
\\
\end{array}
\right]=0
$.
Therefore,
$T=(d_1+d_2)d_1d_2>0$, and the standard metric (due to assertion 2)) is a saddle point of the scalar curvature functional $S$.
\smallskip

Now let $G$ be locally isomorphic to the group $Sp(n)$ for $n\geq 2$.
As a subgroup of $K$, we consider $Sp(n-1)$.
In this case, we have the relations
$d_1=2n^2-3n+1$, $d_2=4n-1$,
$
\left[
\begin{array}{c}
1
\\
1\,1
\\
\end{array}
\right]=\frac{n}{n+1}d_1$,
hence,
$
a=\left[
\begin{array}{c}
1
\\
2\,2
\\
\end{array}
\right]=\frac{2n^2-3n+1}{n+1}
$.
Therefore,
$$
T=(2n^2+n)(2n^2-3n+1)\left(
4n-1-2\frac{2n^2+n}{n+1}
\right)=\frac{(2n^2+n)(2n-1)(n-1)^2}{n+1}>0,
$$
that is, for $n \geq 2$, the standard metric on the group $Sp(n)$
is a saddle point of the functional $S$ (due to assertion 2)).
\smallskip

It remains for us to consider the group $SU(n)$ for $n\geq 3$.
Let us consider a subgroup $U(n-1)$ of $SU(n)$.
We get the decomposition $su(n)=\mathfrak{g} = \mathfrak{p}_1 \oplus \mathfrak{p}_2 \oplus \mathfrak{p}_3$, where the module $\mathfrak{p}_1$ corresponds to
the subalgebra $\mathbb{R}$, $\mathfrak{p}_2$ corresponds to the subalgebra $su(n-1)$
($\mathfrak{p}_1 \oplus\mathfrak{p}_2$ is the Lie algebra of the subgroup $U(n-1)$), and the module $\mathfrak{p}_3$
is orthogonal to $\mathfrak{p}_1 \oplus \mathfrak{p}_2$ with respect to the Killing form of $\mathfrak{g}$.
Since the pair $(su (n), su(n-1)\oplus \mathbb{R})$ is symmetric,
then it is easy to show that
$d_1=1$, $d_2=n^2-2n$, $d_3=2n-2$,
$
\left[
\begin{array}{c}
1
\\
3\,3
\\
\end{array}
\right]=1
$,
$
\left[
\begin{array}{c}
2
\\
3\,3
\\
\end{array}
\right]=n-2
$.
Let us consider the following one-parameter family of metrics:
$$
(\cdot , \cdot )_t=
e^{2t}(\cdot ,\cdot)|_{\mathfrak{p}_1}+
e^{-2t/(n-2)}(\cdot ,\cdot)|_{\mathfrak{p}_2}+
e^{t}(\cdot ,\cdot)|_{\mathfrak{p}_3}.
$$
Since
$2d_1-\frac{2}{n-2}d_2+d_3=0$,
then the metrics under consideration have the same volume as the standard one has.
It is easy to calculate the scalar curvature for this family of metrics.
$$
h(t)=S((\cdot , \cdot )_t)=
\frac{(n-1)(n-2)}{4}\,e^{2t/(n-2)}+(n-1)e^{-t}-\frac{1}{4}-
\frac{n-2}{4}e^{-2(n-1)t/(n-2)}.
$$
Therefore,
$$
h^{\prime}(t)=
\frac{n-1}{2}\,e^{-2(n-1)t/(n-2)}
{\left(
e^{nt/(n-2)}-1
\right)}^2 >0
$$
for $t\neq 0$, hence, the function $h$ strictly increases.
Therefore, in this case, the standard metric is also a
saddle point for the scalar curvature functional $S$.
The theorem is proved. \footnote{\,It should be noted that the standard metric on any Lie group locally isomorphic to the group $SO(n)$, $n=4,5,6$,
is a saddle point (indeed, $SO(4)$ is not simple,
$SO(5)$ is locally isomorphic to $Sp(2)$, and $SO(6)$ is locally isomorphic to $SU(4)$). On the other hand, it is quite unexpectedly
that the standard metric on any Lie group locally isomorphic to the group $SO(n)$, $n\geq 7$, as well as on any exceptional compact simple Lie group, gives a local maximum
for the scalar curvature functional $S$,
see Section 5.1 in [{\it J. Lauret, On the stability of homogeneous Einstein manifolds, preprint, 2021, arXiv:2105.06336}\,] for the proof.}
\end{proof}

\begin{rmk}
It is easy to show that the standard metric on the group
$SU(2)$ is not a global maximum point
of the scalar curvature functional $S$.
Note also that this
the statement is a simple consequence of one of the results
by M.~Wang and W.~Ziller~\cite{WZ3}.
\end{rmk}

\begin{rmk}
The above construction for groups $SU(n)$, $n \geq 3$, generalizes the construction
by G.~Jensen for the group $SU(3)$ \cite{Jen2}.
Note that $h^{\prime \prime}(0)=0$ in this case.
\end{rmk}

\vspace{3mm}

\end{document}